\documentclass[11pt]{amsart}
\usepackage[T1]{fontenc}
\usepackage{lmodern}
\usepackage[a4paper,margin=28mm]{geometry}
\usepackage{amsmath,amssymb,amsthm,booktabs,tikz}
\usepackage[hidelinks]{hyperref}
\newtheorem{theorem}{Theorem}[section]
\newtheorem{lemma}[theorem]{Lemma}
\newtheorem{proposition}[theorem]{Proposition}

\theoremstyle{remark}

\newcommand{\ceil}[1]{\left\lceil #1\right\rceil}
\newcommand{\floor}[1]{\left\lfloor #1\right\rfloor}
\newcommand{\grid}{\Gamma}
\title[Unit diagonals in odd square arrays]{Nonintersecting Unit Diagonals in Odd Square Arrays}
\author{Luyu Song}
\address{School of Mathematical Sciences, Peking University, Beijing, China}
\date{October 15, 2026}
\keywords{Nonintersecting diagonals, matchings, Cartesian grids, isometric path covers}
\hypersetup{pdftitle={Nonintersecting Unit Diagonals in Odd Square Arrays},pdfauthor={Luyu Song}}

\begin{document}
\begin{abstract}
Let $D(n)$ be the maximum number of unit-square diagonals in an $n\times n$
square array such that no two selected diagonals share an endpoint and no
unit square contains both diagonals. We prove that, for every odd positive
integer $n$,
$D(n)=n(n+1)/2+\floor{(n+1)/6}$.
The matching lower bound was established by Boyland, Roth, Pint\'er,
Lauk\'o, Schoenfield, and Wasielewski. We obtain the upper bound by
transforming any admissible selection $M$ on $V_{2k-1}$ into an isometric
path cover of the $k\times k$ vertex grid with at most $2k^2-|M|$
paths. An auxiliary matching decomposes the original vertices into
alternating paths. On each such path, all segments between blocks
are either of type $da$ or of type $bc$. Projecting these paths to
the block grid gives coordinate-monotone sequences, which become
geodesics when diagonal steps are expanded. The Fisher--Fitzpatrick theorem then yields the sharp
upper bound. We also describe a Lean verification of the upper-bound
reduction, with the grid-cover theorem supplied as an explicit hypothesis.
\end{abstract}
\maketitle

\section{Introduction}

Consider the lattice vertex set
\[
 V_n=\{0,1,\ldots,n\}^2
\]
and the $n\times n$ array of unit squares that it determines. A
\emph{unit diagonal} is a diagonal of one of these squares. A selection
is \emph{admissible} if its members have pairwise disjoint endpoints
and no square contains both of its diagonals. These conditions are
equivalent to the selected segments being pairwise disjoint. Indeed,
the relative interiors of distinct unit squares are disjoint, and a
diagonal meets the boundary of its own square only at its endpoints.
Write $D(n)$ for the maximum cardinality of an admissible selection.

Boyland, Roth, Pint\'er, Lauk\'o, Schoenfield, and Wasielewski
\cite{boyland} proved
\[
 D(n)=\frac{n(n+1)}2\qquad\text{when $n$ is even}.
\]
For odd $n$, Proposition~5 and the border extension following its
proof give the lower bound
\begin{equation}\label{eq:known-lower}
 D(n)\ge \frac{n(n+1)}2+\floor{\frac{n+1}{6}}.
\end{equation}
They also gave an upper bound and discussed the conjectured optimality
of their constructions. The odd-order formula is still labeled a
conjecture in OEIS entry A264041 as accessed on October 15, 2026
\cite{oeis}. Our contribution is a general upper bound matching
\eqref{eq:known-lower}.

\begin{theorem}\label{thm:main}
For every integer $k\ge1$,
\begin{equation}\label{eq:main-k}
 D(2k-1)=2k^2-\ceil{\frac{2k}{3}}.
\end{equation}
Equivalently, for every odd positive integer $n$,
\begin{equation}\label{eq:main-n}
 D(n)=\frac{n(n+1)}2+\floor{\frac{n+1}{6}}.
\end{equation}
\end{theorem}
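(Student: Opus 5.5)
The lower bound in \eqref{eq:main-n} is \eqref{eq:known-lower}. I will first check that \eqref{eq:main-k} and \eqref{eq:main-n} agree. With $n=2k-1$ we have $n(n+1)/2=k(2k-1)=2k^2-k$. Writing $k=3q+r$ gives $\floor{k/3}=k-\ceil{2k/3}$, so $2k^2-k+\floor{k/3}=2k^2-\ceil{2k/3}$. It therefore remains to prove
\[
D(2k-1)\le 2k^2-\ceil{\frac{2k}{3}}.
\]
I will follow the strategy announced in the abstract. Fix an admissible selection $M$ on $V_{2k-1}$. From it I will build an isometric path cover of the $k\times k$ vertex grid $\grid$ with at most $2k^2-|M|$ paths. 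The Fisher--Fitzpatrick theorem says every isometric path cover of the $k\times k$ grid needs at least $\ceil{2k/3}$ paths. Combining the two gives $2k^2-|M|\ge\ceil{2k/3}$, which is the bound we want.

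For the construction, I partition $V_{2k-1}=\{0,\dots,2k-1\}^2$ into $k^2$ blocks $\{2i,2i+1\}\times\{2j,2j+1\}$. Each block is identified with the vertex $(i,j)$ of $\grid$, and I label the four corners of each block $a,b,c,d$ in a fixed orientation. Every block has a central unit square, and the remaining unit squares straddle two or four blocks. So every selected diagonal is either internal to a block or joins vertices of two blocks that are adjacent or diagonally adjacent. There are $4k^2=2\cdot 2k^2$ vertices. I take the auxiliary matching $N$ that pairs, inside each block, $a$ with $b$ and $c$ with $d$, say; the pairing will be chosen so that the segment types below come out right. The union $M\cup N$ is a disjoint union of alternating paths and cycles. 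Adding edges of $M$ to the perfect matching $N$ leaves $|N|-|M|=2k^2-|M|$ components, counting as components the isolated $N$-edges and counting cycles suitably; I expect admissibility, or a parity argument, to rule cycles out. So the number of alternating paths is exactly $2k^2-|M|$, and each path becomes one path of the cover.

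The core step, and the main obstacle, is to show that each alternating path projects to a geodesic in $\grid$. This will be a local case analysis of how an $M$-edge can leave a block through one corner type and enter the next block. The disjointness conditions should force every segment between consecutive blocks along one path to have the same type, either $da$ or $bc$. Type $da$ should force both coordinates to be weakly increasing, and type $bc$ should force one coordinate to increase and the other to decrease. In either case the sequence of visited blocks is coordinatewise monotone. Replacing each diagonal step between blocks by an L-shaped pair of grid edges then gives a monotone lattice path, hence a geodesic. I also need the projections to cover every vertex of $\grid$; this is automatic, since each block contains vertices and each vertex lies on some path.

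The delicate parts are these. First, $N$ must be chosen so that type mixing is impossible; I would try the pairing that makes $a$ and $d$ opposite corners, and check all configurations of diagonals in the unit squares adjacent to a block corner. Second, a path must not revisit a block after leaving it, which should follow from monotonicity. Third, I must handle diagonals internal to a block, which connect $N$-partners or opposite corners. If this case analysis becomes unwieldy, I would simply enumerate all local configurations of two adjacent blocks.
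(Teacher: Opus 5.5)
Your outline has the same architecture as the paper: blocks, an auxiliary perfect matching $N$, alternating paths of $M\cup N$, projection, and Fisher--Fitzpatrick. Your check that the two forms of the formula agree is correct. However, the step you defer as the main obstacle is the entire content of the upper bound, and the mechanism you expect (``disjointness should force'' uniform segment types) is not what makes it work.

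With the pairing you actually write down, $N=\{ab,cd\}$ in every block, write $a=(2i,2j)$, $b=(2i,2j+1)$, $c=(2i+1,2j)$, $d=(2i+1,2j+1)$. Take $k\ge2$ and let $M$ consist of the diagonals $(1,1)(2,0)$ and $(2,1)(1,2)$. They lie in different unit squares and have distinct endpoints, so $M$ is admissible. Yet $M\cup N$ contains the path $(1,0),(1,1),(2,0),(2,1),(1,2),(1,3)$. Its block sequence is $(0,0)\to(1,0)\to(0,1)$, so the block row index goes $0,1,0$. Any grid walk through these blocks has length at least $3$ between blocks at distance $1$. So admissibility does not prevent a $da$ segment from being followed by a $bc$ segment. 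This happens whenever the part of the path inside a block ends at two adjacent corners.

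The missing idea is that uniformity comes from parity together with a suitable choice of $N$. Choose $N$ so that each local route (the part of $M\cup N$ inside one block) has its two free ends at \emph{opposite} corners. The paper takes $N=\{ad,bc\}$ in blocks with no internal $M$-edge and $N=\{ab,cd\}$ in blocks with one, giving the routes $c\,d\,a\,b$ or $a\,b\,c\,d$. Admissibility enters here only to exclude both internal diagonals in a single block. Your tentative alternative of pairing opposite corners everywhere also works, provided an internal $M$-edge that coincides with an $N$-edge is treated as a discarded $2$-cycle. You would still have to carry that argument out.

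Once $N$ is chosen this way, the argument goes through as follows.
\begin{enumerate}
\item A unit diagonal flips both coordinate parities, so every external $M$-edge joins a $d$ to an $a$, or a $b$ to a $c$.
\item Hence it joins two routes with the same pair of ports.
\item Therefore all block increments on a component lie in $\{(1,0),(0,1),(1,1)\}$, or all lie in $\{(-1,0),(0,1),(-1,1)\}$.
\end{enumerate}

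Acyclicity also needs an actual argument, not an appeal to ``admissibility or parity.'' Use the potential $i+j$ on routes with ports $a,d$ and $j-i$ on routes with ports $c,b$. It is constant along routes and strictly increases on external edges.

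Finally, the number of path components is $2k^2-|M|$ whether or not cycles occur, since it is half the number of $M$-unmatched vertices. Your claimed count of $|N|-|M|$ \emph{components} is off by the number of cycles. Acyclicity is really needed for your claim that every block contains a vertex lying on some path, which is what makes the projections a cover.
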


The upper bound applies to every admissible selection, without imposing
a prescribed pattern. Its main ingredient is a transformation to an
isometric path cover of a smaller grid. The number of paths in the
cover is controlled exactly by the number of unmatched original
vertices. Section~\ref{sec:upper} proves this transformation, including
the type invariant needed for geodesicity, and combines the resulting
upper bound with the published lower bound to prove
Theorem~\ref{thm:main}. Finally,
Section~\ref{sec:lean} states the scope of the accompanying Lean
verification.

\section{Grid paths and the Fisher--Fitzpatrick theorem}\label{sec:grids}

Coordinates are written as $(r,c)$, with the row coordinate increasing
downward and the column coordinate increasing to the right. Thus a
unit diagonal joins $(r,c)$ to $(r+1,c+1)$ or joins $(r,c+1)$ to
$(r+1,c)$. An admissible selection is a matching in the graph of all
unit diagonals, subject to the additional prohibition on the two
diagonals of a single square.

Let
\[
 \grid_k=P_k\mathbin{\square}P_k
\]
be the Cartesian grid with vertex set $\{0,\ldots,k-1\}^2$ and edges
joining vertices at Manhattan distance one. Here $P_k$ has $k$
vertices. A path is \emph{isometric}, or \emph{geodesic}, if it is a
shortest path between its endpoints. A path consisting of one vertex
is permitted. An \emph{isometric path cover} is a family of such paths
whose vertex sets cover the graph; different paths may share vertices.

The distance between $(i,j)$ and $(i',j')$ in $\grid_k$ is
$|i-i'|+|j-j'|$. In particular, an orthogonal grid path whose two
coordinates are each monotone is isometric: its horizontal and
vertical step counts are the absolute changes in the corresponding
coordinates, so its length equals this distance.

\begin{theorem}[Fisher--Fitzpatrick]\label{thm:ff}
The minimum number of paths in an isometric path cover of $\grid_k$
is $\ceil{2k/3}$.
\end{theorem}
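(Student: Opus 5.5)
The statement just given is the Fisher--Fitzpatrick theorem, a published external result. The paper invokes it as a black box (and the Lean verification treats it as an explicit hypothesis), so the honest plan is to cite it. Still, here is how I would prove it: a lower bound by counting, and an upper bound by explicit construction.

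\emph{Lower bound.} Assume $k\ge2$; the case $k=1$ is trivial. The strategy is to find a set $S$ of vertices of $\grid_k$ such that any single isometric path meets $S$ in at most $3$ vertices, and such that $|S|$ forces $\ceil{2k/3}$ paths.

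The natural candidate is the boundary cycle of the grid, with $4(k-1)$ vertices. An isometric path is coordinate-monotone, so it travels from one corner-quadrant direction to the opposite one. As a result it can meet the boundary only in at most two arcs, one near each endpoint, plus possibly a stretch along a side.

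This crude count is not sharp. I would refine it by a weighting argument: assign weights to boundary vertices so that each geodesic collects total weight at most $3$, while the total weight is about $2k$. Corners and their neighbours will need special weights. The heart of the argument is a case analysis on which sides of the square the path touches, using monotonicity of both coordinates.

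I expect this step to be the main obstacle, because geodesics running along a side collect many boundary vertices. Weighting only the ``extreme'' sides, or choosing a diagonal antichain-like set, may be needed. What I would try first is to take $S$ as the set of vertices at maximal distance along the two antidiagonal corners and use the fact that a geodesic is monotone in a fixed orthant.

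\emph{Upper bound.} I would cover $\grid_k$ with $\ceil{2k/3}$ geodesics explicitly, working in groups of $3$ rows or columns. For each block of three consecutive columns, I would use two staircase paths. Each staircase snakes monotonically and is routed so that, together, the two staircases and their neighbours cover all vertices. I would then verify monotonicity of each path and check the residues of $k$ modulo $3$ separately.

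Given the length of this case analysis, in the paper itself I would simply cite Fisher and Fitzpatrick for Theorem~\ref{thm:ff} and use only its lower-bound direction in Section~\ref{sec:upper}.
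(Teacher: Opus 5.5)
Citing the result is exactly what the paper does. It gives no proof of this theorem, attributes it to Fisher and Fitzpatrick, and uses only the inequality that every isometric path cover of $\grid_k$ has at least $\ceil{2k/3}$ paths.

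The self-contained sketch you add, however, has a genuine gap in the lower bound, and the proposed method cannot be repaired.

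\begin{itemize}
\item \emph{Boundary weights.} These fail at once. The four geodesics made of two adjacent sides (e.g.\ the top row followed by the right column) cover each boundary vertex at least twice. So boundary weights with $w(P)\le 3$ on every geodesic have total at most $6$, giving only $p\ge 2$.
\item \emph{Unweighted sets.} A set $S$ meeting every geodesic in at most $3$ vertices satisfies $|S|\le 9$. A chain in the componentwise order lies on one geodesic, and so does an antichain (sorted by row, its columns strictly decrease). Mirsky's theorem then bounds $|S|$ by $3\cdot 3$.
\item \emph{General weights.} No choice of weights can work, because a weighting bound is the LP dual of fractional covering, and the fractional cover number is too small. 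For $k=5$, give weight $1/4$ to each of the four images, under the symmetries of the square, of
\[
(0,0),(0,1),(0,2),(1,2),(2,2),(2,3),(2,4),(3,4),(4,4),
\]
and to each of the eight images of
\[
(0,0),(1,0),(1,1),(1,2),(1,3),(1,4),(2,4),(3,4),(4,4).
\]
Every non-corner vertex of $\grid_5$ then receives weight exactly $1$ and each corner $3/2$, with total weight $3$. Consequently every $w\ge 0$ with $w(P)\le 3$ for all geodesics has total weight at most $9$. Your method therefore yields at best $p\ge 3$, whereas the theorem asserts $\ceil{10/3}=4$.
\end{itemize}

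A correct lower bound must exploit that the cover consists of whole integral paths, for example how complete monotone paths can pass the corners and sides. An averaging or weighting inequality is not enough.

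The upper-bound sketch is also only a plan. A geodesic meets a strip of three consecutive columns in at most $k+2$ vertices, so for $k\ge 5$ the two paths assigned to a strip cannot cover it by themselves. The cross-strip routing that would make $\ceil{2k/3}$ paths suffice is not specified. Since the paper uses only the lower bound, this matters less, but as written the sketch proves neither direction.
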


This is the square-grid specialization of the grid-cover results in
\cite[Section~4]{fisher}; see also \cite{manuel} for the distinction
between covers and partitions. We use only the assertion that every
isometric path cover contains at least $\ceil{2k/3}$ paths. The grid
in this statement has $k^2$ vertices, rather than $k^2$ unit squares.

\section{An upper bound by local routing}\label{sec:upper}

Fix $k\ge1$ and an arbitrary admissible selection $M$ on $V_{2k-1}$.
Put $m=|M|$. Partition the $4k^2$ original vertices into the disjoint
four-vertex blocks
\begin{equation}\label{eq:blocks}
 B_{ij}=\{(2i+u,2j+v):u,v\in\{0,1\}\},\qquad 0\le i,j<k.
\end{equation}
Within a block, write
\[
 a_{ij}=(2i,2j),\quad b_{ij}=(2i,2j+1),\quad
 c_{ij}=(2i+1,2j),\quad d_{ij}=(2i+1,2j+1).
\]
We suppress the subscripts when discussing a single block and display
its labels as $\begin{smallmatrix}a&b\\c&d\end{smallmatrix}$.
An edge of $M$ is \emph{internal} if its endpoints belong to the same
block, and \emph{external} otherwise. The only possible internal edges
are $ad$ and $bc$. They cannot both be selected, since they are the
two diagonals of one unit square.

\subsection{Local routes and their ports}

Define an auxiliary perfect matching $N$ within the blocks. In a block
with no internal edge of $M$, choose $ad$ and $bc$ as the two
$N$-edges. In a block containing either internal edge, choose $ab$
and $cd$. In each case $N$ pairs all four vertices and has no edge
in common with $M$.
Here $N$ is a matching in the graph-theoretic sense: its edges may
cross in the drawing, and such crossings do not create graph vertices.

The graph consisting of the local $N$-edges and the internal
$M$-edges is a union of paths. We call these paths \emph{local
routes}. Their endpoints are called \emph{ports}. A route is of
\emph{type A} if its ports are $a,d$, oriented from $a$ to $d$, and
of \emph{type B} if its ports are $c,b$, oriented from $c$ to $b$.
All possibilities are listed in Table~\ref{tab:local}.

\begin{table}[ht]
\centering
\begin{tabular}{llll}
\toprule
Internal edge of $M$ & Edges of $N$ & Oriented local routes & Types\\
\midrule
none & $ad,bc$ & $a\to d$, $c\to b$ & A and B\\
$ad$ & $ab,cd$ & $c\to d\to a\to b$ & B\\
$bc$ & $ab,cd$ & $a\to b\to c\to d$ & A\\
\bottomrule
\end{tabular}
\caption{Local routes. In the last two rows the intermediate vertices
are already matched by the internal edge of $M$.}\label{tab:local}
\end{table}

The routes partition the original vertices. At least one route occurs
in every block. Furthermore, an external $M$-edge can meet a local
route only at a port: each intermediate vertex of a three-edge route
is already incident with its internal $M$-edge.

\begin{figure}[ht]
\centering
\begin{tikzpicture}[scale=.85,every node/.style={font=\small}]
\foreach \x/\cas in {0/0,4/1,8/2}{
 \begin{scope}[xshift=\x cm]
  \coordinate(a) at (0,1.25);\coordinate(b) at (1.25,1.25);
  \coordinate(c) at (0,0);\coordinate(d) at (1.25,0);
  \ifnum\cas=0
   \draw[dashed,thick] (a)--(d) (b)--(c);
   \node at (.625,-.6){No internal edge};
  \else
   \draw[dashed,thick] (a)--(b) (c)--(d);
   \ifnum\cas=1
    \draw[very thick] (a)--(d);\node at (.625,-.6){Internal edge $ad$};
   \else
    \draw[very thick] (b)--(c);\node at (.625,-.6){Internal edge $bc$};
   \fi
  \fi
  \foreach \p in {a,b,c,d}{\fill (\p) circle (1.4pt);}
  \node[left] at (a){$a$};\node[right] at (b){$b$};
  \node[left] at (c){$c$};\node[right] at (d){$d$};
 \end{scope}
}
\end{tikzpicture}
\caption{Solid edges belong to $M$ and dashed edges belong to $N$.
A crossing without a dot is not a graph vertex. The auxiliary
matching is used only for routing.}\label{fig:local}
\end{figure}

It is essential to distinguish the type of a route from the labels
of its intermediate vertices. For example, $c-d-a-b$ is a type B
route even though it contains $a$ and $d$. Those two vertices cannot
carry external edges. Type preservation concerns the ports through
which routes are connected.

\subsection{Alternating paths in the original graph}

Let
\[
 H=(V_{2k-1},M\cup N).
\]
Since $N$ is a perfect matching, $M$ is a matching, and the two
matchings have no common edge, every vertex of $H$ has degree one
or two. A degree-two vertex is incident with one edge of each
matching. Each connected component is therefore an alternating
path or an alternating cycle. In particular, $H$ has no isolated
vertices. The local routes in Table~\ref{tab:local} are the parts
of these components inside individual blocks, before external
$M$-edges are included.

\begin{lemma}[Uniform type and directed steps]\label{lem:type}
Every external edge of $M$ joins the exit port of one local route
to the entry port of another local route of the same type.
Consequently, all local routes in a connected component of $H$
have a common type.

Orient the local routes as in Table~\ref{tab:local}, and orient
each external edge from its exit port to its entry port. These
orientations make every component of $H$ a directed path or a
directed cycle. In a type A component, every external edge is
directed from $d_{ij}$ to $a_{i'j'}$, with block increment
\begin{equation}\label{eq:A-steps}
 (i'-i,j'-j)\in\{(1,0),(0,1),(1,1)\}.
\end{equation}
In a type B component, every external edge is directed from
$b_{ij}$ to $c_{i'j'}$, with block increment
\begin{equation}\label{eq:B-steps}
 (i'-i,j'-j)\in\{(-1,0),(0,1),(-1,1)\}.
\end{equation}
Thus all cross-block segments on a given component are of type
$da$, or all are of type $bc$.
\end{lemma}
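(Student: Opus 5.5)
The whole argument rests on a parity observation. A unit diagonal changes both coordinates by $\pm1$, so it joins a vertex with coordinate parities $(\text{even},\text{even})$ to one with $(\text{odd},\text{odd})$, or $(\text{even},\text{odd})$ to $(\text{odd},\text{even})$. In the block labelling \eqref{eq:blocks}, every $a$-vertex has even/even coordinates, every $d$-vertex odd/odd, every $b$-vertex even/odd and every $c$-vertex odd/even. So every edge of $M$, internal or external, is of the form $a$--$d$ or $b$--$c$, possibly across blocks. I will prove the statement by combining this with Table~\ref{tab:local} and then doing a short coordinate computation for the increments.

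First I check which vertices can carry external $M$-edges. In a block with an internal edge, the two intermediate vertices of the three-edge route are matched internally, as already noted after Table~\ref{tab:local}. Every other vertex is a port. So an external edge joins two ports. In a type A route the ports are $a$ and $d$; in a type B route they are $c$ and $b$. By the parity observation, an external edge at an $a$- or $d$-port ends at a $d$- or $a$-vertex, which must itself be a port. A port carrying label $a$ or $d$ belongs to a type A route, since type B ports are labelled $c,b$. Hence both ends of the edge lie on type A routes, and it joins the exit port $d$ of one route to the entry port $a$ of another. The $b$--$c$ case is symmetric: it joins the exit $b$ of a type B route to the entry $c$ of another type B route. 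The two joined routes are distinct because they lie in different blocks. Since components of $H$ are built from local routes glued by external edges, connectedness propagates the type, so each component has a uniform type.

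For orientation, each port is the endpoint of exactly one local route and carries at most one $M$-edge. Each intermediate vertex carries no external edge. Consequently, after orienting routes as in the table and external edges from exit to entry, every vertex has in-degree at most one and out-degree at most one. Every entry port has in-degree from at most one external edge, and every exit port has out-degree into at most one. Because each component of $H$ is already a path or a cycle as an undirected graph, this consistent orientation makes it a directed path or a directed cycle.

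Finally I enumerate increments. For type A, $d_{ij}=(2i+1,2j+1)$ has diagonal neighbours $(2i,2j)$, $(2i+2,2j)$, $(2i,2j+2)$ and $(2i+2,2j+2)$. The first is the internal $a_{ij}$, and the others are $a_{i+1,j}$, $a_{i,j+1}$ and $a_{i+1,j+1}$, which gives \eqref{eq:A-steps}. For type B, $b_{ij}=(2i,2j+1)$ has neighbours $(2i+1,2j)=c_{ij}$ (internal), $(2i+1,2j+2)=c_{i,j+1}$, $(2i-1,2j)=c_{i-1,j}$ and $(2i-1,2j+2)=c_{i-1,j+1}$, which gives \eqref{eq:B-steps}. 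The only delicate step is the port analysis in the first paragraph, namely the claim that an $a$-labelled intermediate vertex (as in $c\to d\to a\to b$) never carries an external edge. The table settles this directly: such a vertex is already matched by its internal $M$-edge. Everything else is bookkeeping.
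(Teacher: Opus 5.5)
Your proof is correct and follows the paper's argument essentially step for step: the parity observation that forces every $M$-edge to be $a$--$d$ or $b$--$c$, the fact that external edges meet local routes only at ports, the orientation bookkeeping at entry and exit ports, and the enumeration of the diagonal neighbours of $d_{ij}$ and $b_{ij}$. Your step ``in-degree and out-degree at most one on an undirected path or cycle'' is a slightly more explicit version of the paper's remark that the orientations fit together.
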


\begin{proof}
A unit diagonal changes both coordinate parities. Thus it pairs
an $a$-vertex with a $d$-vertex, or a $b$-vertex with a $c$-vertex.
External edges meet local routes only at ports. An external $ad$
edge therefore joins two type A routes, and an external $bc$ edge
joins two type B routes. In either case, it joins an exit to an
entry. Every component of $H$ consists of local routes joined by
external edges, so its local routes all have the same type.

At the entry port of a local route, its first edge points out of
that port, and an external edge, if present, points into it. At
the exit port the directions are reversed. The matching property
allows at most one external edge at each port. At an intermediate
vertex of a local route there is already one incoming and one
outgoing edge, and no external edge can occur there. The orientations
therefore fit together to give directed paths and directed cycles.

The four unit-diagonal neighbors of
$d_{ij}=(2i+1,2j+1)$ are
\[
 a_{ij},\quad a_{i+1,j},\quad a_{i,j+1},\quad a_{i+1,j+1},
\]
whenever the corresponding indices are in range. The first neighbor
is internal; the other three give \eqref{eq:A-steps} in the
orientation $d\to a$. Similarly, the four possible neighbors of
$b_{ij}=(2i,2j+1)$ are
\[
 c_{ij},\quad c_{i-1,j},\quad c_{i,j+1},\quad c_{i-1,j+1}.
\]
Removing the internal neighbor gives \eqref{eq:B-steps} in the
orientation $b\to c$.
\end{proof}

\begin{lemma}[Path decomposition and count]\label{lem:count}
Every connected component of $H$ is an alternating path with at
least one edge. The number of these paths is
\begin{equation}\label{eq:components}
 p=2k^2-m.
\end{equation}
\end{lemma}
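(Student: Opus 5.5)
Two things need proving: that $H$ has no cycles, and the count $p=2k^2-m$.

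For acyclicity, I would use Lemma~\ref{lem:type}. Suppose a component of $H$ is a directed cycle. It must contain at least one external edge. Otherwise it would lie inside a single block and consist of local routes alone, but local routes are paths with distinct ports, and these ports would then have degree one.

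Define a potential on blocks. For type A, use $\phi(i,j)=i+j$; every external step in \eqref{eq:A-steps} raises it by $1$ or $2$. For type B, use $\phi(i,j)=j-i$; every step in \eqref{eq:B-steps} raises it by $1$ or $2$. Within a local route the block, and hence $\phi$, is unchanged. So going once around the directed cycle, $\phi$ would strictly increase and yet return to its starting value, which is impossible. Hence every component is a directed path. Since $H$ has no isolated vertices, each path has at least one edge.

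For the count, I would use the fact that a path component has exactly two vertices of degree one, and that the components together cover all $4k^2$ vertices. Every vertex has its $N$-edge, and its degree is $2$ exactly when it is also covered by $M$. The number of $M$-covered vertices is $2m$, so the number of degree-one vertices is $4k^2-2m$. Dividing by two gives $p=2k^2-m$.

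The only point needing care is the acyclicity argument. In particular, one must check that type A and type B segments never mix on one cycle, which Lemma~\ref{lem:type} already guarantees, so that a single potential applies to the whole component. The rest is routine degree counting.
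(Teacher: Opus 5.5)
Your proposal is correct and follows essentially the same route as the paper: the potential $i+j$ for type A and $j-i$ for type B is constant on local routes and strictly increases on external edges, which rules out directed cycles, and the count $p=(4k^2-2m)/2$ comes from the degree-one vertices being exactly the $4k^2-2m$ vertices not matched by $M$. Your argument that a cycle must contain an external edge also matches the paper's remark that the local routes inside a single block are disjoint paths.
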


\begin{proof}
By Lemma~\ref{lem:type}, a component has a common type. Assign
the integer $i+j$ to each vertex in a type A local route in block
$B_{ij}$, and assign $j-i$ to each vertex in a type B local route
in that block. Along an oriented local route this value is constant.
Along an external edge it increases strictly, by
\eqref{eq:A-steps} and \eqref{eq:B-steps}.

Any cycle in $H$ would be directed by Lemma~\ref{lem:type} and
would contain an external edge: inside any single block, the local
routes are disjoint paths. The assigned integer would then be
nondecreasing around the cycle and would increase strictly at least
once, which is impossible. Hence every component is a path. It
has at least one edge because $N$ meets every original vertex.

A vertex of $H$ has degree one exactly when it is not matched by
$M$. There are $4k^2-2m$ such vertices. Each path component has
exactly two degree-one vertices, so
\[
 p=\frac{4k^2-2m}{2}=2k^2-m.
\]
\end{proof}

\subsection{Projected paths in the block grid}

Let $Q=\grid_k$ be the ordinary $k\times k$ vertex grid: its
vertices are the block indices $(i,j)$, and two vertices are adjacent
exactly when their Manhattan distance is one. Define the block
projection
\[
 \pi:V_{2k-1}\longrightarrow V(Q),\qquad
 \pi(r,c)=\bigl(\floor{r/2},\floor{c/2}\bigr).
\]

For a directed path $P$ of $H$, project its vertex sequence by
$\pi$ and delete consecutive repetitions. Each remaining step
corresponds to an external edge of $M$. A step may be diagonal
in the block coordinates; expand it into an east step followed
by its vertical step, as specified below. We call the resulting
walk the \emph{projected walk} of $P$ in $Q$. The next lemma shows
that it is a geodesic path.

The paths used to cover $Q$ are these projections of the paths
of $H$. Different projected paths may meet at a grid vertex;
the prescribed continuation of each is inherited from its path
in $H$. Their number is counted in $H$, not by the connected
components of their union in $Q$.

\begin{lemma}[Geodesic cover transformation]\label{lem:cover}
An admissible selection $M$ on $V_{2k-1}$ determines an isometric
path cover of $Q=\grid_k$ with at most $2k^2-|M|$ paths.
\end{lemma}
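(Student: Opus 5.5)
The plan is to take as the cover the family of projected walks of the $p=2k^2-|M|$ path components of $H$ supplied by Lemma~\ref{lem:count}. Three things must be checked: each projected walk is a path in $Q$, it is geodesic, and the union of the walks covers $V(Q)$. The count is immediate from \eqref{eq:components}. Only the lower-bound-relevant direction is needed, so the cover may use at most, not exactly, that many paths. If a walk degenerates to a single block, it is a one-vertex path, which is permitted.

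\emph{Covering.} The routes partition $V_{2k-1}$ and every block contains at least one route. Since each route lies in some component of $H$, every block index $(i,j)$ is the projection of some vertex on some path of $H$. That path's projected walk therefore visits $(i,j)$, and the expansion of diagonal steps only adds vertices.

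\emph{Geodesicity.} Fix a directed path $P$ of $H$. By Lemma~\ref{lem:type} all of its routes have a common type. After deleting consecutive repetitions, each step of the projected sequence is the block increment of an external edge. In type A every increment lies in $\{(1,0),(0,1),(1,1)\}$ by \eqref{eq:A-steps}; in type B every increment lies in $\{(-1,0),(0,1),(-1,1)\}$ by \eqref{eq:B-steps}. Expand a diagonal increment $(\pm1,1)$ into the east step $(0,1)$ followed by the vertical step $(\pm1,0)$. The intermediate block lies in range because both the start and end blocks do, and its coordinates are one from each. The result is an orthogonal walk in $Q$ in which the column coordinate is nondecreasing and the row coordinate is monotone: nondecreasing for type A, nonincreasing for type B. Every step changes exactly one coordinate by one, so no step is null. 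Both coordinates are monotone and at least one changes strictly at each step, so no vertex repeats and the walk is a path. By the remark in Section~\ref{sec:grids}, an orthogonal path with both coordinates monotone has length equal to the Manhattan distance between its ends, hence it is isometric.

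\emph{Expected obstacle.} The main care point is the expansion and repetition-deletion step. Deleting repetitions must leave only steps that arise from external edges. This holds because the vertices of one local route share a block, consecutive routes on $P$ are joined by an external edge whose block increment is nonzero by Lemma~\ref{lem:type}, and a route is never revisited. One must also make sure that monotonicity is global along $P$, not merely per step; this is exactly where the uniform-type statement of Lemma~\ref{lem:type} is used, since a single path cannot mix $da$ and $bc$ segments. With that, the family of $p$ projected paths is an isometric path cover of $Q$ of size $2k^2-|M|$.
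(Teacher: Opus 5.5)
Your proposal is correct and follows essentially the same route as the paper. You take the $2k^2-|M|$ path components of $H$ from Lemma~\ref{lem:count}, use the uniform-type conclusion of Lemma~\ref{lem:type} to get coordinatewise monotone block steps, expand each diagonal step east-then-vertically to obtain a monotone (hence simple and geodesic) grid path, and obtain coverage from the fact that every block contains a local route lying on some component.
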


\begin{proof}
Let $P$ be a directed path component of $H$, and project it as
described above. Lemma~\ref{lem:type} applies to the entire path:
all its cross-block segments are of type $da$, or all are of type
$bc$. In the first case, both block coordinates are nondecreasing;
in the second, the row coordinate is nonincreasing and the column
coordinate is nondecreasing. After consecutive repetitions have
been deleted, every step changes at least one coordinate strictly.
Thus a block index cannot recur in the resulting sequence.

Expand its diagonal steps using
\begin{align*}
 (i,j)\to(i+1,j+1)
 &\quad\rightsquigarrow\quad
 (i,j)\to(i,j+1)\to(i+1,j+1),\\
 (i,j)\to(i-1,j+1)
 &\quad\rightsquigarrow\quad
 (i,j)\to(i,j+1)\to(i-1,j+1).
\end{align*}
Each inserted vertex lies in $Q$, because its coordinates are
coordinates of the two original endpoints. The expanded walk
uses only edges of $Q$ and preserves both coordinate directions.

This monotonicity proves simplicity and shortestness. If a vertex
occurred twice, both coordinates would have the same value at those
occurrences. Monotonicity would force them to remain constant
between those occurrences, contradicting the unit steps. The
horizontal step count equals the total increase of the column
coordinate, and the vertical step count equals the absolute change
of the row coordinate. Hence the length equals the Manhattan
distance between the endpoints, so the projected walk is a geodesic
path. If $P$ lies in a single block, its projection is the
single-vertex path at that block index.

Every original vertex belongs to a path component of $H$. Thus
each block is visited by at least one of these paths. Projection
retains its block index, and expansion only adds vertices.
Consequently, the projected paths cover all $k^2$ vertices of $Q$.
They form an indexed family with one member for each of the
$p=2k^2-m$ paths of $H$, by Lemma~\ref{lem:count}. Intersections
between members are allowed. If coincident members are discarded,
the number of paths can only decrease.
\end{proof}

\begin{proposition}\label{prop:upper}
For every $k\ge1$,
\[
 D(2k-1)\le2k^2-\ceil{\frac{2k}{3}}.
\]
\end{proposition}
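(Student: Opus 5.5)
The plan is to combine Lemma~\ref{lem:cover} with the lower bound in Theorem~\ref{thm:ff}. Fix $k\ge1$ and let $M$ be an admissible selection on $V_{2k-1}$ of maximum size, so that $|M|=D(2k-1)$. Such an $M$ exists because the set of unit diagonals is finite, and the empty selection is admissible.

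First, apply Lemma~\ref{lem:cover} to $M$. It yields an isometric path cover of $Q=\grid_k$ whose members are indexed by the $p=2k^2-|M|$ path components of $H$ (Lemma~\ref{lem:count}). If some projected paths coincide, discard the duplicates. The family still covers $Q$, consists of isometric paths, and has at most $2k^2-|M|$ members.

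Second, invoke Theorem~\ref{thm:ff}, using only its lower-bound half: every isometric path cover of $\grid_k$ has at least $\ceil{2k/3}$ paths. This gives
\[
 \ceil{\frac{2k}{3}}\le 2k^2-|M|,
\]
and rearranging yields $D(2k-1)=|M|\le 2k^2-\ceil{2k/3}$.

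The main obstacle lies in confirming that the object produced by Lemma~\ref{lem:cover} really meets the hypotheses of Theorem~\ref{thm:ff}. Its members must be genuine paths of $\grid_k$, with single-vertex paths allowed as the theorem permits. Each must be a shortest path between its endpoints, and together they must cover all $k^2$ vertices. All of this is already established in Lemma~\ref{lem:cover}, so the proof reduces to this short counting argument. I would also briefly note the degenerate case $k=1$: here the bound reads $D(1)\le 2-1=1$, which is consistent since at most one diagonal fits in a single square.
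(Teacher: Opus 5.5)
Your proposal is correct and matches the paper's proof. You apply Lemma~\ref{lem:cover} to get an isometric path cover of $\grid_k$ with at most $2k^2-|M|$ paths, then use the lower-bound half of Theorem~\ref{thm:ff}; fixing a maximum $M$ up front instead of maximizing over all $M$ afterward makes no difference.
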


\begin{proof}
For any admissible $M$, Lemma~\ref{lem:cover} and
Theorem~\ref{thm:ff} imply
\[
 \ceil{\frac{2k}{3}}\le2k^2-|M|.
\]
Rearranging and maximizing over $M$ proves the assertion.
\end{proof}

\begin{proof}[Proof of Theorem~\ref{thm:main}]
For $k\ge1$, the lower bound of Boyland et al.\ recorded in
\eqref{eq:known-lower}, with $n=2k-1$, gives
\[
 D(2k-1)\ge 2k^2-k+\floor{\frac{k}{3}}
 =2k^2-\ceil{\frac{2k}{3}},
\]
where the equality follows from
$\ceil{2k/3}=k-\floor{k/3}$.
Together with Proposition~\ref{prop:upper}, this proves
\eqref{eq:main-k}. Substituting $n=2k-1$ gives
\eqref{eq:main-n}.
\end{proof}

\section{Formal verification and its scope}\label{sec:lean}

A Lean~4.19.0 formalization of the upper-bound reduction is provided
as ancillary material with this article on arXiv. It is available
through the ancillary-file links on the article's abstract page.
The source project includes a pinned mathlib dependency, a toolchain
specification, and a README giving reproduction instructions and the
correspondence between formal declarations and the paper's arguments.
The formalization verifies the encoding of unit diagonals, the
uniform route type, acyclicity, the component count, and the
transformation to a cover by shortest grid paths. The implementation
uses local routes as an intermediate representation of the paths of $H$.

The Fisher--Fitzpatrick grid-cover bound is an explicit hypothesis
of the formal upper-bound theorem; that published result is not proved
in the Lean project. The published lower-bound construction of
Boyland et al.\ is also not formalized, and the formal
exact-value statement assumes the existence of an attaining selection.
Thus the formal verification covers the reduction and its conditional
consequences, rather than every ingredient of
Theorem~\ref{thm:main}. A separate Python script in the ancillary
material checks explicit configurations attaining the known lower
bound for all odd side lengths
from 1 through 99. These finite checks supplement the proofs and do not
replace them.

\section*{Disclosure of generative AI assistance}

Generative AI tools (OpenAI Codex) assisted with the development and
exposition of the proof, the drafting and revision of the manuscript,
and the Lean formalization. The author is responsible for the
mathematical claims, attribution, and submitted content. The scope
of the formal verification is stated in Section~\ref{sec:lean}.

\end{document}